\documentclass[12pt, reqno]{amsart}
\usepackage{amsmath, amsthm, amscd, amsfonts, amssymb, graphicx, color}
\usepackage[bookmarksnumbered, colorlinks, plainpages]{hyperref}

\input{mathrsfs.sty}

\textheight 22.5truecm \textwidth 15.5truecm
\setlength{\oddsidemargin}{0.15in}\setlength{\evensidemargin}{0.15in}

\setlength{\topmargin}{-.5cm}

\newtheorem{theorem}{Theorem}[section]
\newtheorem{lemma}[theorem]{Lemma}

\newtheorem{corollary}[theorem]{Corollary}
\theoremstyle{definition}

\newtheorem{conjecture}[theorem]{Conjecture}

\theoremstyle{remark}
\newtheorem{remark}[theorem]{Remark}
\numberwithin{equation}{section}
\begin{document}

\title[Squaring operator inequalities]{Squaring operator P\'{o}lya--Szeg\"{o} and Diaz--Metcalf type inequalities}

\author[M.S. Moslehian, X. Fu] {Mohammad Sal Moslehian$^1$ and Xiaohui Fu$^{2}$}

\address{$^1$ Department of Pure Mathematics, Center of Excellence in
Analysis on Algebraic Structures (CEAAS), Ferdowsi University of
Mashhad, P. O. Box 1159, Mashhad 91775, Iran}
\email{moslehian@um.ac.ir,
moslehian@member.ams.org}

\address{$^2$ School of Mathematics and Statistics, Hainan Normal University, Haikou, 571158, P. R. China}
\email{fxh6662@sina.com }

\subjclass[2010]{47A30, 47A63, 46L05, 15A60.}

\keywords{Operator inequality; P\'{o}lya--Szeg\"{o} inequality; Diaz--Metcalf type inequality; positive linear map; geometric mean.}

\begin{abstract}
We square operator P\'{o}lya--Szeg\"{o} and Diaz--Metcalf type inequalities as follows: If operator inequalities $0<m_{1}^{2} \leq A\leq M_{1}^{2}$ and $0<m_{2}^{2}\leq B\leq M_{2}^{2}$ hold for some positive real numbers $m_{1}\leq M_{1}$ and $m_{2}\leq M_{2}$, then for every unital positive linear map $\Phi$ the following inequalities hold:
\begin{eqnarray*}
 (\Phi(A)\sharp\Phi(B))^2 &\leq&\left(\frac{M_1M_2 + m_1m_2}{2\sqrt{M_1M_2m_1m_2}}\right)^4\Phi(A\sharp B)^{2}
 \end{eqnarray*}
and
\begin{eqnarray*}
\left( \frac{M_2m_2}{M_1m_1}\Phi (A) + \Phi (B) \right)^2 \leq \left( \frac{(M_1m_1(M_2^2 + m_2^2) + M_2m_2(M_1^2 + m_1^2))^2}{8\sqrt{M_2M_1m_1m_2} M_1^2m_1^2M_2m_2} \right)^2\Phi (A\sharp B)^2\,.
 \end{eqnarray*}
\end{abstract}

\maketitle
\section{Introduction}

Let ${\mathbb B}({\mathscr H})$ denote the $C^*$-algebra of all bounded linear operators on a complex Hilbert space $({\mathscr H},\langle\cdot,\cdot\rangle)$. Throughout the paper, a capital letter means an operator in ${\mathbb B}({\mathscr H})$. If $\dim \mathscr{H}=n$, then $\mathbb{B}(\mathscr{H})$ can be identified with the space $\mathbb{M}_n$ of all
$n\times n$ complex matrices. We identify a scalar with the identity operator $I$ multiplied by this scalar. An operator $A$ is called positive if $\langle Ax,x\rangle\geq0$ for all $x\in{\mathscr H }$, and we then write $A\geq0$. An operator $A$ is said to be strictly positive (denoted by $A>0$) if it is a positive invertible operator. For self-adjoint operators $A, B\in{\mathbb B}({\mathscr H})$, we say $B\geq A$ if $B-A\geq0$. For strictly positive operators, $A^2\leq k^2 B^2$ for some constant $k$ if and only if $(AB^{-1})^*(AB^{-1})\leq k^2$ and this occurs if and only if $\|AB^{-1}\|\leq k$. A linear map $\Phi:{\mathbb B}({\mathscr H})\to{\mathbb B}({\mathscr K})$ is called positive if $A \geq 0$ implies $\Phi(A)\geq 0$. If this implication holds for $>$ instead of $\geq$, we say that $\Phi$ is strictly positive. It is said to be unital if $\Phi$ preserves the identity operator. The operator norm is denoted by $\|\cdot\|$. For $A, B> 0$, the operator geometric mean $A\sharp B$ is defined by $A\sharp B = A^{\frac{1}{2}}(A^{-\frac{1}{2}}BA^{-\frac{1}{2}})^{\frac{1}{2}}A^{\frac{1}{2}}$. Using a standard limit argument, this notion can be extended for positive operators $A, B$. The geometric mean operation is monotone, in the sense that $C_1\leq D_1$ and $C_2\leq D_2 $ imply that $C_1\sharp C_2\leq D_1\sharp D_2$.

Moslehian et al. \cite[Theorem 2.1]{BZ02} gave operator P\'{o}lya--Szeg\"{o} inequality (see also \cite{LEE} for an interesting proof for matrices) and Diaz--Metcalf type inequality as follows:
\begin{theorem}\label{thm1}
Let $\Phi$ be a positive linear map. If $0<m_{1}^{2} \leq A\leq M_{1}^{2}$ and $0<m_{2}^{2}\leq B\leq M_{2}^{2}$ for some positive real numbers $m_{1}\leq M_{1}$ and $m_{2}\leq M_{2}$,
 \begin{eqnarray}\label{amm}
\Phi(A)\sharp\Phi(B) \leq \alpha\cdot\Phi(A\sharp B)\,,
\end{eqnarray}
where
 \[
\alpha := \frac{1}{2}\left\{\sqrt{\frac{M_1M_2}{m_1m_2}} + \sqrt{\frac{m_1m_2}{M_1M_2}}\right\}.
\]
\end{theorem}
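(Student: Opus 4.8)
The plan is to derive \eqref{amm} from two ingredients: the ordinary operator arithmetic--geometric mean (AM--GM) inequality combined with the homogeneity of the geometric mean, and a reverse (Diaz--Metcalf type) estimate $sA+s^{-1}B\le 2\alpha\,(A\sharp B)$ at the operator level. First I would record the homogeneity relation $(\lambda P)\sharp(\mu Q)=\sqrt{\lambda\mu}\,(P\sharp Q)$ for scalars $\lambda,\mu>0$, which follows directly from the defining formula for $\sharp$. Combined with the basic operator AM--GM inequality $P\sharp Q\le\frac12(P+Q)$ and the linearity of $\Phi$, this gives, for every $s>0$,
\[
\Phi(A)\sharp\Phi(B)=(s\Phi(A))\sharp(s^{-1}\Phi(B))\le\tfrac12\bigl(s\Phi(A)+s^{-1}\Phi(B)\bigr)=\tfrac12\Phi\bigl(sA+s^{-1}B\bigr).
\]
Hence it suffices to exhibit a single $s>0$ for which the operator inequality $sA+s^{-1}B\le 2\alpha\,(A\sharp B)$ holds, since applying the positive (therefore monotone) map $\Phi$ and chaining the two displays yields $\Phi(A)\sharp\Phi(B)\le\alpha\,\Phi(A\sharp B)$.

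The heart of the argument is establishing this Diaz--Metcalf bound, which I would reduce to a scalar estimate using the congruence invariance of the geometric mean. Writing $T=A^{-1/2}BA^{-1/2}$, the hypotheses $m_2^2\le B\le M_2^2$ together with $m_1^2\le A\le M_1^2$ transfer to the two-sided spectral bound $(m_2/M_1)^2\le T\le (M_2/m_1)^2$. Since $A\sharp B=A^{1/2}T^{1/2}A^{1/2}$ and $sA+s^{-1}B=A^{1/2}(sI+s^{-1}T)A^{1/2}$, conjugating by $A^{-1/2}$ reduces the desired inequality to $sI+s^{-1}T\le 2\alpha\,T^{1/2}$, and a further conjugation by $T^{-1/4}$ reduces it to
\[
sY^{-1}+s^{-1}Y\le 2\alpha\,I,\qquad\text{where } Y=T^{1/2}\text{ satisfies } p:=\tfrac{m_2}{M_1}\le Y\le \tfrac{M_2}{m_1}=:q.
\]

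Finally I would verify this last inequality by functional calculus. The function $f(y)=sy^{-1}+s^{-1}y$ is convex, so its maximum over $[p,q]$ is attained at an endpoint; balancing $f(p)=f(q)$ forces $s=\sqrt{pq}=\sqrt{M_2m_2/(M_1m_1)}$, and then $f(p)=f(q)=\sqrt{q/p}+\sqrt{p/q}=\sqrt{M_1M_2/(m_1m_2)}+\sqrt{m_1m_2/(M_1M_2)}=2\alpha$, precisely the constant in the statement. Thus $f(Y)\le 2\alpha\,I$, which unwinds through the two conjugations to $sA+s^{-1}B\le 2\alpha\,(A\sharp B)$ and completes the proof. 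I expect the main obstacle to be the reduction step: one must carefully track how the bounds on $A$ and $B$ produce the bounds on $T$, and use that conjugation both preserves the operator order and interacts cleanly with the geometric-mean factorization, since the operators $A$, $B$, and $T$ need not commute.
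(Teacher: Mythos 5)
Your proof is correct, and it is essentially the standard argument: the paper itself does not reprove Theorem~\ref{thm1} but quotes it from \cite{BZ02}, and your route coincides with that one, since your operator inequality $sA+s^{-1}B\le 2\alpha\,(A\sharp B)$ with the balanced choice $s=\sqrt{M_2m_2/(M_1m_1)}$ is exactly the unconjugated Diaz--Metcalf inequality of Theorem~\ref{thm2} (your convexity/endpoint argument for $f(y)=sy^{-1}+s^{-1}y$ on $[p,q]$ being equivalent to the quadratic factorization $(q-y)(y-p)\ge 0$ used there), after which weighted AM--GM and homogeneity of $\sharp$ finish the proof just as in the reference.
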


\begin{theorem}\label{thm2}
Let $\Phi$ be a positive linear map. If $0<m_{1}^{2} \leq A\leq M_{1}^{2}$ and $0<m_{2}^{2}\leq B\leq M_{2}^{2}$ for some positive real numbers $m_{1}\leq M_{1}$ and $0<m_{2}\leq M_{2}$, then the following inequality holds:
\begin{eqnarray}\label{ak}
 \frac{M_{2}m_{2}}{M_{1}m_{1}}\Phi(A)+\Phi(B) &\leq&\left(\frac{M_2}{m_1}+\frac{m_2}{M_1}\right)\Phi(A\sharp B).
 \end{eqnarray}
\end{theorem}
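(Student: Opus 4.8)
The plan is to strip away $\Phi$ first and then collapse the problem to a single-variable quadratic operator inequality. Since a positive linear map is both linear and monotone (i.e.\ $X\le Y$ implies $\Phi(X)\le\Phi(Y)$), it suffices to prove the $\Phi$-free operator inequality
\[
\frac{M_2 m_2}{M_1 m_1}A + B \le \left(\frac{M_2}{m_1} + \frac{m_2}{M_1}\right)(A\sharp B).
\]
Indeed, applying $\Phi$ to both sides, using linearity on the left and pulling the scalar coefficient out on the right, returns exactly \eqref{ak}. Note this uses nothing beyond positivity and linearity, consistent with the hypothesis that $\Phi$ need not be unital here.

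Next I would exploit the congruence-invariance of the geometric mean, $X^*(A\sharp B)X=(X^*AX)\sharp(X^*BX)$, specialized to $X=A^{-1/2}$, which yields the standard identity $A^{-1/2}(A\sharp B)A^{-1/2}=(A^{-1/2}BA^{-1/2})^{1/2}$. Setting $T:=A^{-1/2}BA^{-1/2}$ and $S:=T^{1/2}\ge 0$, and sandwiching the displayed inequality between $A^{-1/2}$ on each side (a congruence, hence order-preserving), the problem reduces to
\[
S^2 - \left(\frac{M_2}{m_1} + \frac{m_2}{M_1}\right)S + \frac{M_2 m_2}{M_1 m_1}I \le 0.
\]
The scalar motivation is that $f(t)=\frac{M_2 m_2}{M_1 m_1}t+t^{-1}$ is convex and takes the common value $\frac{M_2}{m_1}+\frac{m_2}{M_1}$ at both ends of the relevant interval, so it stays below that value in between.

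The crux is to pin down the spectrum of $S$ and then factor. From $m_2^2\le B\le M_2^2$ together with $m_1^2\le A\le M_1^2$ (so $M_1^{-2}I\le A^{-1}\le m_1^{-2}I$) one obtains $\frac{m_2^2}{M_1^2}I\le T\le \frac{M_2^2}{m_1^2}I$, whence $\frac{m_2}{M_1}I\le S\le \frac{M_2}{m_1}I$. The quadratic then factors as $\bigl(S-\frac{M_2}{m_1}I\bigr)\bigl(S-\frac{m_2}{M_1}I\bigr)$, a product of two commuting self-adjoint operators (both polynomials in $S$), the first being $\le 0$ and the second $\ge 0$. For commuting $P\ge 0$ and $Q\le 0$ one has $PQ=P^{1/2}QP^{1/2}\le 0$, so this product is negative semidefinite, establishing the quadratic inequality. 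Undoing the congruence by multiplying through by $A^{1/2}$ on both sides recovers the $\Phi$-free inequality, and the reduction in the first paragraph completes the proof.

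The main obstacle is the non-commutativity of $A$ and $B$, which blocks a direct transfer of the scalar convexity argument. The device that overcomes it is the congruence reduction to the single positive operator $S$: once everything is expressed through $S$, the factorization is exact because all the relevant operators commute, and the only delicate point is the elementary fact that the product of a commuting nonnegative and nonpositive pair is nonpositive.
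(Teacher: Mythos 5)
Your proof is correct: the reduction to the $\Phi$-free inequality via positivity and linearity, the congruence by $A^{-1/2}$ giving $S=(A^{-1/2}BA^{-1/2})^{1/2}$ with spectrum in $[m_2/M_1,\,M_2/m_1]$, and the factorization $(S-\tfrac{M_2}{m_1}I)(S-\tfrac{m_2}{M_1}I)\le 0$ all check out. The paper itself states Theorem \ref{thm2} without proof, citing \cite{BZ02}, and your argument is essentially the standard one given there, so there is nothing to flag.
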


It is well known that $t^s\,\,(0\leq s\leq 1)$ is an operator monotone function and not so is $t^2$; see \cite{abc}. However, Fujii et al. \cite[Theorem 6]{FUJ} use the Kantrovich inequality to show that $t^2$ is order preserving in a certain sense as follows:
\begin{theorem}\label{fuj}
Let $0 < m \leq A \leq M$ and $A \leq B$. Then
\[
A^2 \leq \frac{(M + m)^2}{4Mm}B^2.
\]
\end{theorem}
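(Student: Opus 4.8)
The plan is to exploit the fact that, although $t\mapsto t^2$ is not operator monotone (so one cannot simply square $A\leq B$), the square $A^2$ can be squeezed between two affine expressions that \emph{are} operator monotone, the constant $\frac{(M+m)^2}{4Mm}$ being exactly the Kantorovich constant attached to the spectral window $[m,M]$. Concretely, I would dominate $A^2$ from above by an affine function of $A$, transport that bound from $A$ to $B$ using $A\leq B$ together with linearity, and finally dominate the affine expression in $B$ by a multiple of $B^2$ via a completion of squares.

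First I would record the elementary consequence of the spectral bounds $0<m\leq A\leq M$. Since $M-A\geq 0$ and $A-m\geq 0$ are both functions of $A$, they commute, and hence their product is positive:
\[
(M-A)(A-m)\geq 0.
\]
Expanding and using that $m,M$ are scalars gives $(M+m)A-Mm-A^2\geq 0$, that is,
\[
A^2\leq (M+m)A-Mm.
\]
This is the Kantorovich-type linearization: it replaces the non-monotone $A^2$ by an affine (hence operator monotone) majorant. Next I would use the hypothesis $A\leq B$. Because $M+m>0$, the order is preserved under multiplication, so $(M+m)A\leq (M+m)B$ and therefore $(M+m)A-Mm\leq (M+m)B-Mm$; chaining this with the previous display yields $A^2\leq (M+m)B-Mm$.

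It then remains to bound the right-hand side by a multiple of $B^2$. Here I note that $B\geq A\geq m>0$, so $B$ is strictly positive, and I would invoke the identity
\[
\frac{(M+m)^2}{4Mm}B^2-(M+m)B+Mm=\left(\frac{M+m}{2\sqrt{Mm}}\,B-\sqrt{Mm}\right)^2\geq 0,
\]
whose discriminant vanishes precisely because the coefficient is the Kantorovich constant. This gives $(M+m)B-Mm\leq \frac{(M+m)^2}{4Mm}B^2$, and combining the two bounds proves the claim.

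The step I expect to require the most care is not a computation but the \emph{choice of route}: the temptation is to square $A\leq B$ directly, which fails since squaring is not operator monotone. The real content is to find a single affine intermediary, namely $(M+m)t-Mm$, that simultaneously majorizes $t^2$ on $[m,M]$ (yielding the first inequality) and is itself dominated by $\frac{(M+m)^2}{4Mm}\,t^2$ for every $t>0$ (the completion of squares). That both bounds close with the \emph{same} constant is what makes the argument work, and it is worth double-checking that only the spectral bounds of $A$—and no upper bound on $B$—are used, since the linearization needs $m\leq A\leq M$ while the final step needs only $B>0$.
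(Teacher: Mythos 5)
Your proof is correct and complete: the linearization $A^2\leq (M+m)A-Mm$ from $(M-A)(A-m)\geq 0$, the transfer to $B$ via $A\leq B$, and the completion of squares $(M+m)B-Mm\leq \frac{(M+m)^2}{4Mm}B^2$ all check out. Note that the paper itself gives no proof of this statement --- it is quoted from Fujii et al.\ \cite[Theorem 6]{FUJ} as a known Kantorovich-type result --- but your argument is exactly the standard one behind it, and the key step $(M-A)(A-m)\geq 0$ is the same device the paper deploys for $(M-C)(C-m)\geq 0$ in its proof of Theorem \ref{thm3}, so your route is fully in the spirit of the paper.
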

Other similar results for the power $p$ instead of $2$ was given by Furuta in \cite{FUR}. Lin \cite{LIN1} nicely reduced the study of squared operator inequalities to that of some norm inequalities, see also \cite{LIN2}. This paper intends to square the operator P\'{o}lya--Szeg\"{o} inequality (\ref{amm}) and the Diaz--Metcalf type inequality (\ref{ak}) in different ways based on the above considerations.
\section{Results}

We start our work with the following known result.
\begin{lemma}\label{lem3}\cite{AND} Let $\Phi$ be any positive linear map and $A, B \geq 0$. Then
\begin{eqnarray}\label{e3}
\Phi(A\sharp B)\leq \Phi(A)\sharp \Phi(B)\,.
\end{eqnarray} \end{lemma}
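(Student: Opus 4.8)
The plan is to reduce the statement to the extremal description of the geometric mean. For strictly positive $A,B$, recall that $A\sharp B$ is the \emph{largest} self-adjoint operator $X$ for which the $2\times 2$ block matrix $\begin{pmatrix} A & X\\ X& B\end{pmatrix}$ is positive; equivalently, by taking the Schur complement of $A$, positivity of this block is the same as $XA^{-1}X\le B$, and $A\sharp B$ is precisely the self-adjoint solution of $XA^{-1}X=B$. In particular $\begin{pmatrix} A & A\sharp B\\ A\sharp B& B\end{pmatrix}\ge 0$. The general case $A,B\ge 0$ would then follow by replacing $A,B$ with $A+\varepsilon I$ and $B+\varepsilon I$ and letting $\varepsilon\downarrow 0$, using continuity of $\sharp$ and of $\Phi$.

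The main steps are then as follows. First I would apply $\Phi$ entrywise to the positive block above, aiming to conclude that $\begin{pmatrix}\Phi(A) & \Phi(A\sharp B)\\ \Phi(A\sharp B) & \Phi(B)\end{pmatrix}\ge 0$. Since $\Phi(A\sharp B)$ is self-adjoint, the maximality property applied now to $\Phi(A)$ and $\Phi(B)$ would immediately yield $\Phi(A\sharp B)\le \Phi(A)\sharp\Phi(B)$, which is the desired inequality \eqref{e3}.

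The delicate point, and the step I expect to be the real obstacle, is the entrywise application of $\Phi$: a merely positive linear map need not be $2$-positive, so it does not automatically send a positive block matrix to a positive one. I would handle this by first reducing to a unital map: writing $P=\Phi(I)$ (invertible after the $\varepsilon$-regularization) and $\Psi(\cdot)=P^{-1/2}\Phi(\cdot)P^{-1/2}$, the map $\Psi$ is unital and positive, and the congruence invariance $C^{*}(X\sharp Y)C=(C^{*}XC)\sharp(C^{*}YC)$ for invertible $C$ shows that the inequality for $\Phi$ is equivalent to the one for $\Psi$. For the unital map $\Psi$ the required positivity transfer can be secured from the joint operator concavity of $(A,B)\mapsto A\sharp B$ together with the Davis--Choi--Jensen inequality, or, more concretely, from the Kubo--Ando integral representation $A\sharp B=\frac{2}{\pi}\int_{0}^{\infty}\big((\lambda^{-1}A):(\lambda B)\big)\,\frac{d\lambda}{\lambda}$, where $X:Y=(X^{-1}+Y^{-1})^{-1}$ denotes the parallel sum. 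This representation reduces everything to the single inequality $\Psi(X:Y)\le \Psi(X):\Psi(Y)$ for positive maps, which follows from the variational (minimum) description of the parallel sum and then integrates back to give the claim. Verifying this parallel-sum inequality cleanly, and checking that the interchange of $\Phi$ with the integral is legitimate, is where the bulk of the work lies.
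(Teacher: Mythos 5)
The paper offers no proof of this lemma at all: it is quoted verbatim from Ando \cite{AND}, so the only fair comparison is with the standard argument behind that citation. Your opening moves are fine --- the $\varepsilon$-regularization, the maximality characterization of $\sharp$ via the block matrix, the reduction to a unital map by the congruence $\Psi(\cdot)=P^{-1/2}\Phi(\cdot)P^{-1/2}$ --- and you correctly identify the real obstruction, namely that a positive map need not be $2$-positive. (One small slip: perturbing $A,B$ does not make $\Phi(I)$ invertible; you must perturb $\Phi$ itself or pass to the support projection of $\Phi(I)$.)

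The genuine gap is in your proposed resolution. Passing to the Kubo--Ando integral representation reduces the claim to $\Psi(X:Y)\le\Psi(X):\Psi(Y)$, but this inequality does \emph{not} ``follow from the variational (minimum) description of the parallel sum'' for a merely positive map: the variational description says exactly that $Z\le X:Y$ iff $\left(\begin{smallmatrix}X-Z&-Z\\-Z&Y-Z\end{smallmatrix}\right)\ge0$, so the statement you need is \emph{literally equivalent} to $\Psi\otimes\mathrm{id}_2$ preserving positivity of one such block matrix --- i.e., you have relocated the $2$-positivity problem, not removed it. If instead you restrict the infimum $\langle (X:Y)x,x\rangle=\inf_{u+v=x}(\langle Xu,u\rangle+\langle Yv,v\rangle)$ to scalar decompositions $u=(1-t)x$, $v=tx$ (the only ones that commute with applying $\Psi$), you obtain only the quadratic-form bound $\langle\Psi(X:Y)x,x\rangle\le\langle\Psi(X)x,x\rangle:\langle\Psi(Y)x,x\rangle$, which is strictly weaker than the operator inequality since $\langle(C:D)x,x\rangle\le\langle Cx,x\rangle:\langle Dx,x\rangle$ in general. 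The standard repair is to arrange, by a further congruence, that only a \emph{one-variable} Jensen inequality is needed: set $C=A^{-1/2}BA^{-1/2}$ and $\Psi(W)=\Phi(A)^{-1/2}\Phi(A^{1/2}WA^{1/2})\Phi(A)^{-1/2}$, a unital positive map; then the lemma is exactly $\Psi(C^{1/2})\le\Psi(C)^{1/2}$, and the integral representation $\sqrt{c}=\tfrac{1}{\pi}\int_0^\infty\frac{c}{c+\lambda}\,\frac{d\lambda}{\sqrt{\lambda}}$ involves only parallel sums with a \emph{scalar}, $C:\lambda I=\lambda I-\lambda^2(C+\lambda I)^{-1}$, for which $\Psi(C:\lambda I)\le\Psi(C):\lambda I$ is precisely Choi's inequality $\Psi(W^{-1})\ge\Psi(W)^{-1}$ --- i.e., the paper's own Lemma \ref{lem1}, valid for unital positive (not necessarily $2$-positive) maps. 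With that replacement your outline becomes a complete proof.
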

Our first main result reads as follows.

\begin{theorem}\label{thm3}
Let $\Phi$ be a unital positive linear map. If $0<m_{1}^{2} \leq A\leq M_{1}^{2}$ and $0<m_{2}^{2}\leq B\leq M_{2}^{2}$ for some positive real numbers $m_{1}\leq M_{1}$ and $m_{2}\leq M_{2}$, then the following inequality holds:
 \begin{eqnarray}\label{af}
\left\{\Phi(A)\sharp\Phi(B)\right\}^2 \ \leq\ \beta\cdot\Phi(A\sharp B)^2\,,
\end{eqnarray}
where
 \begin{eqnarray}\label{beta}
\beta := \left\{\begin{array}{ll} \alpha^4\qquad \qquad
 & \quad \mbox{if\quad $\alpha^2 \leq \sqrt{\frac{M_1M_2}{m_1m_2}},$} \\
 & \\
\sqrt{\frac{M_1M_2}{m_1m_2}}\left\{2\alpha^2 - \sqrt{\frac{M_1M_2}{m_1m_2}}\right\} &\quad \mbox{if\quad $\alpha^2 \geq \sqrt{\frac{M_1M_2}{m_1m_2}},$}
\end{array}\right.
\end{eqnarray} and $\alpha$ is the number given in Theorem \ref{thm1}.
\end{theorem}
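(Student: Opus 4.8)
The plan is to collapse the operator inequality \eqref{af} to a single scalar quadratic inequality on the common spectral interval of the two operators involved, and then solve that scalar problem exactly. Write $P := \Phi(A)\sharp\Phi(B)$ and $Q := \Phi(A\sharp B)$, and set $m := m_1 m_2$, $M := M_1 M_2$, and $h := \sqrt{M_1M_2/(m_1m_2)}$, so that $\alpha = (M+m)/(2\sqrt{Mm})$ and the case split in \eqref{beta} reads $\alpha^2 \leq h$ versus $\alpha^2 \geq h$. First I would localize both operators. Applying the unital positive map $\Phi$ to $m_1^2 I \leq A \leq M_1^2 I$ and $m_2^2 I \leq B \leq M_2^2 I$ yields $m_1^2 I \leq \Phi(A) \leq M_1^2 I$ and $m_2^2 I \leq \Phi(B) \leq M_2^2 I$; since the geometric mean is monotone and $m_1^2\sharp m_2^2 = m_1m_2$, $M_1^2\sharp M_2^2 = M_1M_2$, this gives $mI \leq P \leq MI$. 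For $Q$, monotonicity of $\sharp$ first gives $mI \leq A\sharp B \leq MI$, and then positivity and unitality of $\Phi$ give $mI \leq Q \leq MI$. Finally, Theorem \ref{thm1} supplies the Pólya--Szegő bound $P \leq \alpha Q$.

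The core step is a completion-of-the-square driven by the spectral localization of $P$. Since the spectrum of $P$ lies in $[m,M]$ and $(M-t)(t-m)\geq 0$ there, we have the operator inequality $(MI - P)(P - mI)\geq 0$, which rearranges to
\begin{eqnarray*}
P^2 \leq (M+m)P - MmI.
\end{eqnarray*}
As $M+m>0$, substituting $P \leq \alpha Q$ from Theorem \ref{thm1} gives
\begin{eqnarray*}
P^2 \leq (M+m)\alpha\,Q - MmI.
\end{eqnarray*}
Hence it suffices to prove $(M+m)\alpha\,Q - MmI \leq \beta Q^2$, that is $\beta Q^2 - (M+m)\alpha\,Q + MmI \geq 0$. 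Because the spectrum of $Q$ is contained in $[m,M]$, this operator inequality follows as soon as the scalar function $g(t) := \beta t^2 - (M+m)\alpha\,t + Mm$ is nonnegative for every $t\in[m,M]$, so the whole problem reduces to choosing the least $\beta$ making $g\geq 0$ on $[m,M]$.

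It remains to minimize over the upward parabola $g$, and this is the only delicate point, as the two branches of $\beta$ emerge exactly here. The vertex of $g$ is at $t^\star = (M+m)\alpha/(2\beta)$. If $\alpha^2 \leq h$, then at $\beta = \alpha^4$ one checks $t^\star = \sqrt{Mm}/\alpha^2 \in [m,M]$, the minimum is the interior tangency value $g(t^\star) = Mm - (M+m)^2\alpha^2/(4\beta)$, and $g(t^\star)=0$ forces $\beta = (M+m)^2\alpha^2/(4Mm) = \alpha^4$; this is the first branch of \eqref{beta} and coincides with Fujii's Theorem \ref{fuj} applied to $P \leq \alpha Q$. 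If instead $\alpha^2 \geq h$, the vertex falls to the left of $m$, so $g$ is increasing on $[m,M]$ and its minimum is attained at $t=m$; solving $g(m)=0$ gives $\beta = \big((M+m)\alpha - M\big)/m$, which simplifies (using $M/m = h^2$ and $\alpha = (h^2+1)/(2h)$) to $h(2\alpha^2 - h) = \sqrt{\tfrac{M_1M_2}{m_1m_2}}\big\{2\alpha^2 - \sqrt{\tfrac{M_1M_2}{m_1m_2}}\big\}$, the second branch. The main obstacle is thus the bookkeeping that the threshold between "vertex interior" and "vertex left of $m$" is precisely $\alpha^2 = h$: this reduces to verifying the sign of $(h-1)(h^3 - 3h^2 - h - 1)$, equivalently that $t^\star \leq m$ in the second regime, after which the endpoint and vertex evaluations collapse to the stated closed forms and the proof is complete.
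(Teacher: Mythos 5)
Your proposal is correct and follows essentially the same route as the paper: localize both $\Phi(A)\sharp\Phi(B)$ and $\Phi(A\sharp B)$ in $[m_1m_2,\,M_1M_2]$, use $(M-X)(X-m)\geq 0$ to get $X^2\leq (M+m)X-Mm$, insert the P\'{o}lya--Szeg\"{o} bound $P\leq\alpha Q$, and reduce to a scalar optimization on $[m,M]$ whose two cases yield the two branches of $\beta$ (the paper phrases this as maximizing $f(t)=\frac{\alpha(M+m)t-Mm}{t^2}$ after conjugating by $C^{-1}$, which is the same computation as your ``least $\beta$ with $g\geq 0$''). The only quibble is your closing aside about the sign of $(h-1)(h^3-3h^2-h-1)$, which is unnecessary: the verification $t^\star\leq m$ in the second regime is directly equivalent to the case hypothesis $\alpha^2\geq h$ by a one-line computation.
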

\begin{proof} Put
\[
M := M_1M_2,\quad m := m_1m_2, \quad C := \Phi(A\sharp B),\quad D := \Phi(A)\sharp\Phi(B)
\]
and
\begin{equation}\label{(1)}
\alpha = \frac{1}{2}\left(\frac{M + m}{\sqrt{Mm}}\right) \geq 1.
\end{equation}
It follows from the monotone property of the operator geometric mean and
$$m_1^2 \leq A \leq M_1^2\quad \mbox{and} \quad m_2^2 \leq B \leq M_2^2$$
that
\begin{eqnarray}\label{order}
\sqrt{m_1^2m_2^2} \leq A\sharp B \leq \sqrt{M_1^2M_2^2}\quad \hbox{and}\quad \sqrt{m_1^2m_2^2} \leq \Phi(A)\sharp \Phi(B) \leq \sqrt{M_1^2M_2^2} \,,
\end{eqnarray}
whence
\begin{equation}\label{(2)}
0 <m \leq C \leq M \quad{\rm and}\quad 0 < m \leq D \leq M
\end{equation}
Theorem \ref{thm1} and inequality \eqref{e3} yield that
\begin{equation}\label{(3)}
C \leq D \leq \alpha C.
\end{equation}
From \eqref{(2)} we have $(M - C)(C - m) \geq 0$ and $(M - D)(D - m) \geq 0$. Hence
\begin{equation}\label{(4)}
C^2 \leq (M + m)C - Mm \quad {\rm and}\quad D^2 \leq (M + m)D - Mm.
\end{equation}
Employing \eqref{(3)} and \eqref{(4)} we have
\begin{equation}\label{(5)}
0 \leq C^{-1}D^2C^{-1} \leq \{\alpha (M + m)C - Mm\}C^{-2}.
\end{equation}
Consider the real function $f(t)$ on $(0,\infty)$ defined as
\begin{equation}\label{(6)}
f(t) := \frac{\alpha(M + m)t - Mm}{t^2}.
\end{equation}
Then we can conclude from \eqref{(2)}, \eqref{(5)} and \eqref{(6)} that
\begin{equation}\label{(7)}
C^{-1}D^2C^{-1} \leq {\rm max}_{m \leq t \leq M}f(t).
\end{equation}
Notice that
\[
f(m) = \frac{\alpha(M + m) - M}{m} \geq \frac{\alpha(M + m) - m}{M} = f(M)
\]
and
\begin{equation}\label{(8)}
f'(t) = \frac{2Mm - \alpha(M + m)t}{t^3}.
\end{equation}
The function $f(t)$ has only one stationary (= maximum) point at
\begin{equation}\label{(9)}
t_0 := \frac{2Mm}{\alpha(M + m)}
\end{equation}
with the maximum value (by \eqref{(1)})
\begin{equation}\label{(10)}
f(t_0) = \frac{\alpha^2(M + m)^2}{4Mm} = \alpha^4.
\end{equation}
Therefore we can conclude that
\[
\max_{m \leq t \leq M} f(t) \leq \left\{\begin{array}{rl}
f(t_0) \quad &\quad \mbox{if \ $m \leq t_0$} \\
f(m) \quad &\quad \mbox{if \ $m \geq t_0.$}
\end{array}\right.
\]
It is immediate to see from \eqref{(1)} and \eqref{(9)} that $m \leq t_0$ is equivalent to $\alpha^2 \leq \sqrt{\frac{M}{m}}.$ Finally we have again by \eqref{(1)}
\begin{equation}\label{(11)}
f(m) = 2\alpha^2\sqrt{\frac{M}{m}} - \frac{M}{m}
= \sqrt{\frac{M}{m}}\left\{2\alpha^2 - \sqrt{\frac{M}{m}}\right\}.
\end{equation}
This completes the proof of Theorem
\end{proof}
\begin{remark}

The axiomatic theory for operator means of positive invertible operators have been developed by Kubo and Ando \cite{ando}. There exists an affine order isomorphism between the class of operator means $\sigma$ and the class of positive operator monotone functions
$f$ defined on $(0,\infty)$ via $f(t)I=I\sigma(tI)\,\,(t>0)$ and $A\sigma
B=A^{1\over 2}f(A^{-1\over2}BA^{-1\over2})A^{1\over2}$. The function $f_{\sharp_\mu}(t)=t^\mu$ on $(0,\infty)$ for $\mu\in(0,1)$ gives the operator weighted geometric mean $A\sharp_\mu B=A^{\frac{1}{2}}\left(A^{\frac{-1}{2}}BA^{\frac{-1}{2}}\right)^{\mu}A^{\frac{1}{2}}$. The case $\mu=1/2$ gives rise to the geometric mean $A\sharp B$. It should be noted that
\begin{eqnarray}\label{e32}
\Phi(A\sigma B) \leq \Phi(A)\sigma \Phi(B)
\end{eqnarray}
is known as an Ando type inequality in the literature, see \cite{AND}. We need the next result appeared in \cite{abc} in some general forms.
\begin{theorem}\label{gg}
Let $\Phi$ be a unital positive linear map and $\sigma$ be an operator mean with the representing function $f$. If $0<m_{1}^{2} \leq A\leq M_{1}^{2}$ and $0<m_{2}^{2}\leq B\leq M_{2}^{2}$ for some positive real numbers $m_{1}\leq M_{1}$ and $m_{2}\leq M_{2}$, then
\begin{eqnarray}\label{akk}
\Phi(A)\sigma \Phi(B) \leq \alpha \Phi(A\sigma B)
\end{eqnarray}
with
\begin{equation}\label{(17)}
\alpha=\max\left\{\frac{f(t)}{\mu_f t+\nu_f}: \frac{m_2^2}{M_1^2}\leq t\leq \frac{M_2^2}{m_1^2}\right\},
\end{equation}
where $$\mu_f=\frac{f(M_2^2/m_1^2)-f(m_2^2/M_1^2)}{(M_2^2/m_1^2)-(m_2^2/M_1^2)}\quad \hbox{and} \quad \nu_f=\frac{(M_2^2/m_1^2)f(m_2^2/M_1^2)-(m_2^2/M_1^2)f(M_2^2/m_1^2)}{(M_2^2/m_1^2)-(m_2^2/M_1^2)}\,.$$
\end{theorem}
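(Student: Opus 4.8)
The plan is to reduce the operator inequality to a pair of one-variable scalar estimates for the representing function $f$ via the Kubo--Ando functional representation $A\sigma B=A^{1/2}f(A^{-1/2}BA^{-1/2})A^{1/2}$, and then to transport them to operators by functional calculus together with the linearity and order preservation of $\Phi$. The first thing I would record is that the affine function $t\mapsto \mu_f t+\nu_f$ is precisely the chord of $f$ joining the endpoints $a:=m_2^2/M_1^2$ and $b:=M_2^2/m_1^2$; a direct substitution gives $\mu_f a+\nu_f=f(a)$ and $\mu_f b+\nu_f=f(b)$. Since the representing function of any operator mean is operator monotone, hence concave, its graph lies above this chord, so $f(t)\geq \mu_f t+\nu_f$ on $[a,b]$, while the very definition of $\alpha$ gives the reverse bound $f(t)\leq \alpha(\mu_f t+\nu_f)$ on the same interval. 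These two scalar inequalities are the engine of the proof; note that together they force $\alpha\geq 1$ and that $\mu_f t+\nu_f>0$ on $[a,b]$, being an affine interpolation of positive values.

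Next I would localize the relevant spectra in $[a,b]$. From $0<m_1^2\leq A\leq M_1^2$ one gets $M_1^{-2}\leq A^{-1}\leq m_1^{-2}$, and combining with $m_2^2\leq B\leq M_2^2$ yields $a\leq A^{-1/2}BA^{-1/2}\leq b$. Because $\Phi$ is unital and positive, applying it to $m_1^2\leq A\leq M_1^2$ and $m_2^2\leq B\leq M_2^2$ gives $m_1^2\leq \Phi(A)\leq M_1^2$ and $m_2^2\leq \Phi(B)\leq M_2^2$, and the same computation places the spectrum of $\Phi(A)^{-1/2}\Phi(B)\Phi(A)^{-1/2}$ in $[a,b]$ as well.

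Writing $Y:=A^{-1/2}BA^{-1/2}$ and $X:=\Phi(A)^{-1/2}\Phi(B)\Phi(A)^{-1/2}$, functional calculus applied to the two scalar inequalities yields $f(Y)\geq \mu_f Y+\nu_f$ and $f(X)\leq \alpha(\mu_f X+\nu_f)$. Conjugating the first by $A^{1/2}$ and using $A\sigma B=A^{1/2}f(Y)A^{1/2}$ gives $A\sigma B\geq \mu_f B+\nu_f A$; conjugating the second by $\Phi(A)^{1/2}$ gives $\Phi(A)\sigma \Phi(B)\leq \alpha(\mu_f \Phi(B)+\nu_f \Phi(A))$. Since $\mu_f,\nu_f$ are scalars and $\Phi$ is linear, the right-hand side equals $\alpha\,\Phi(\mu_f B+\nu_f A)$, and positivity (hence monotonicity) of $\Phi$ together with $\alpha\geq 0$ turns the first estimate into $\alpha\,\Phi(\mu_f B+\nu_f A)\leq \alpha\,\Phi(A\sigma B)$. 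Chaining these two displays produces $\Phi(A)\sigma\Phi(B)\leq \alpha\,\Phi(A\sigma B)$, which is the assertion.

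The step I expect to demand the most care is the spectral localization together with the correct deployment of concavity: one must ensure the representing function is concave on the entire interval $[a,b]$ so that the chord genuinely lies below the graph, and one must invoke unitality at exactly the point where the bounds $m_i^2\leq \Phi(\cdot)\leq M_i^2$ are needed. Without $\Phi(I)=I$ the spectrum of $X$ need not lie in $[a,b]$, and the clean two-sided sandwich would collapse. Everything else is a routine transport of scalar inequalities through the Kubo--Ando representation.
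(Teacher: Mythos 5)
The paper itself gives no proof of this theorem, quoting it from \cite{abc}; your argument is correct and complete, and it is precisely the standard Mond--Pe\v{c}ari\'{c} chord argument behind such reverse Ando inequalities: concavity of the (operator monotone) representing function puts $f$ above its chord on $[m_2^2/M_1^2,\,M_2^2/m_1^2]$, giving $\mu_f B+\nu_f A\leq A\sigma B$, while the definition of $\alpha$ puts $f$ below $\alpha$ times the chord there, and unitality of $\Phi$ is used exactly where needed to localize both $A^{-1/2}BA^{-1/2}$ and $\Phi(A)^{-1/2}\Phi(B)\Phi(A)^{-1/2}$ in that interval. I see no gaps.
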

\noindent If $\sigma=\sharp_\mu\,\,(\mu\in[0,1])$, then
{\footnotesize\begin{align*}
\alpha= \frac{\mu^\mu \left((M_2^2/m_1^2)-(m_2^2/M_1^2)\right) \left((M_2^2/m_1^2)(m_2^2/M_1^2)^\mu-(m_2^2/M_1^2)(M_2^2/m_1^2)^\mu\right)^{1-\mu}}{(1-\mu)^{\mu-1}\left((M_2^2/m_1^2)^\mu-(m_2^2/M_1^2)^\mu\right)^{\mu}},
\end{align*}}
see \cite[Theorem 3]{SEO1}. In particular, for $\sigma=\sharp $ we get the operator P\'{o}lya--Szeg\"{o} inequality.
It is easy to see that, by utilizing the same argument as in Theorem \ref{thm3}, the following general but complicated form of Theorem \ref{thm3} holds:
\end{remark}
\begin{theorem}
Under the same conditions as in Theorem \ref{thm3}
 \begin{align*}
\left(\Phi(A)\sigma \Phi(B)\right)^2 \ \leq\ \beta\cdot\Phi(A\sigma B)^2\,,
\end{align*}
where
{\footnotesize\begin{align*}
 \beta= \displaystyle{\max_{m_1^2f(m_1^{-2}m_2^2) \leq t \leq M_1^2f(M_1^{-2}M_2^2)}}\frac{\alpha(M_1^2f(M_1^{-2}M_2^2) + m_1^2f(m_1^{-2}m_2^2))t - m_1^2M_1^2f(m_1^{-2}m_2^2)f(M_1^{-2}M_2^2)}{t^2}.
\end{align*}}
\end{theorem}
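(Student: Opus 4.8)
The plan is to repeat the argument of Theorem~\ref{thm3} almost verbatim, the only change being that the product bounds $M_1M_2$ and $m_1m_2$ (which in the $\sharp$ case are exactly the geometric means of the endpoints) are replaced by the $\sigma$-specific endpoint values. Accordingly I would set
\[
M_\sigma := M_1^2 f(M_1^{-2}M_2^2),\quad m_\sigma := m_1^2 f(m_1^{-2}m_2^2),\quad C := \Phi(A\sigma B),\quad D := \Phi(A)\sigma\Phi(B),
\]
and let $\alpha$ be the constant furnished by Theorem~\ref{gg}. Writing $g(t) := \big(\alpha(M_\sigma+m_\sigma)t - M_\sigma m_\sigma\big)/t^2$, the target is $D^2 \leq \beta C^2$ with $\beta = \max_{m_\sigma\leq t\leq M_\sigma} g(t)$; note that $M_\sigma m_\sigma = m_1^2 M_1^2 f(m_1^{-2}m_2^2)f(M_1^{-2}M_2^2)$ is precisely the constant term in the stated maximand.

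The first step is to place both $C$ and $D$ inside $[m_\sigma, M_\sigma]$. Evaluating the mean on scalars via $a\sigma b = a f(b/a)$ gives $m_1^2\sigma m_2^2 = m_\sigma$ and $M_1^2\sigma M_2^2 = M_\sigma$; since operator means are monotone in both arguments, the hypotheses $m_1^2\leq A\leq M_1^2$ and $m_2^2\leq B\leq M_2^2$ yield $m_\sigma\leq A\sigma B\leq M_\sigma$, hence $m_\sigma\leq C\leq M_\sigma$. For $D$ I would first invoke unitality and positivity of $\Phi$ to obtain $m_1^2\leq\Phi(A)\leq M_1^2$ and $m_2^2\leq\Phi(B)\leq M_2^2$, and then apply monotonicity of $\sigma$ once more to get $m_\sigma\leq D\leq M_\sigma$. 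Combining the Ando type inequality \eqref{e32}, which gives $C\leq D$, with Theorem~\ref{gg}, which gives $D\leq\alpha C$, produces the two-sided comparison $C\leq D\leq\alpha C$.

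The second step is the squaring trick. From $m_\sigma\leq C\leq M_\sigma$ and $m_\sigma\leq D\leq M_\sigma$ one has $(M_\sigma-C)(C-m_\sigma)\geq0$ and $(M_\sigma-D)(D-m_\sigma)\geq0$, that is $C^2\leq(M_\sigma+m_\sigma)C-M_\sigma m_\sigma$ and the analogue for $D$. Conjugating the $D$-inequality by the positive operator $C^{-1}$ and using $C^{-1}DC^{-1}\leq\alpha C^{-1}$ (which follows from $D\leq\alpha C$) gives
\[
0\leq C^{-1}D^2C^{-1}\leq\big\{\alpha(M_\sigma+m_\sigma)C-M_\sigma m_\sigma\big\}C^{-2}=g(C).
\]
Because the spectrum of $C$ lies in $[m_\sigma,M_\sigma]$, the functional calculus yields $g(C)\leq\beta$, so $C^{-1}D^2C^{-1}\leq\beta$; multiplying on the left and right by $C$ then gives $D^2\leq\beta C^2$, which is the claim.

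The main obstacle is not computational but conceptual: one has to recognize that the scalars $M_1M_2$, $m_1m_2$ of Theorem~\ref{thm3} should be generalized to $M_1^2\sigma M_2^2$ and $m_1^2\sigma m_2^2$, and then verify the bracketing bounds on $C$ and $D$ through the scalar reduction $a\sigma b=af(b/a)$ together with the monotonicity of operator means and unitality of $\Phi$. Unlike the geometric-mean case, there is in general no identity linking $\alpha$ to $M_\sigma$ and $m_\sigma$ that would collapse the maximization to closed form, so $\beta$ must remain the stated maximum; the explicit two-case formula \eqref{beta} is recovered precisely when $\sigma=\sharp$, where $\alpha^2=(M_\sigma+m_\sigma)^2/(4M_\sigma m_\sigma)$ forces the value $\alpha^4$ at the interior stationary point.
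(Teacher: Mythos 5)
Your proposal is correct and is exactly the route the paper intends: the paper gives no written proof beyond the remark that the statement follows ``by utilizing the same argument as in Theorem \ref{thm3}'', and you have carried out that argument faithfully, replacing $M_1M_2$ and $m_1m_2$ by $M_1^2\sigma M_2^2$ and $m_1^2\sigma m_2^2$, using the Ando inequality \eqref{e32} together with Theorem \ref{gg} to get $C\leq D\leq\alpha C$, and then repeating the quadratic/functional-calculus step. Your closing observation that the maximization cannot in general be collapsed to the two-case formula \eqref{beta} is also accurate.
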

Let us return to the case of usual operator geometric mean $\sharp$. An immediate consequence of Theorem \ref{thm3} reads as follows. It can be also deduced from Theorems \ref{thm1} and \ref{fuj}.
\begin{corollary}
Under the same conditions as in Theorem \ref{thm3},
\[
\left\{\Phi(A)\sharp\Phi(B)\right\}^2 \ \leq\ \alpha^4\cdot\Phi(A\sharp B)^2
\]
\end{corollary}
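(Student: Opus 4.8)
The plan is to obtain this corollary directly from Theorem \ref{thm3} by showing that the piecewise constant $\beta$ defined in \eqref{beta} never exceeds $\alpha^4$. Theorem \ref{thm3} already supplies $\{\Phi(A)\sharp\Phi(B)\}^2 \leq \beta\,\Phi(A\sharp B)^2$, and since $\Phi(A\sharp B)$ is positive, $\Phi(A\sharp B)^2 \geq 0$; hence the scalar inequality $\beta \leq \alpha^4$ upgrades the estimate to $(\alpha^4 - \beta)\Phi(A\sharp B)^2 \geq 0$, i.e.\ $\{\Phi(A)\sharp\Phi(B)\}^2 \leq \alpha^4\,\Phi(A\sharp B)^2$. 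So the whole task reduces to a scalar comparison.

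First I would dispose of the two branches in the definition of $\beta$. When $\alpha^2 \leq \sqrt{M_1M_2/(m_1m_2)}$ we have $\beta = \alpha^4$ by definition, so there is nothing to check. When $\alpha^2 \geq \sqrt{M_1M_2/(m_1m_2)}$, abbreviating $y := \sqrt{M_1M_2/(m_1m_2)}$, we have $\beta = y(2\alpha^2 - y) = 2\alpha^2 y - y^2$, and therefore $\alpha^4 - \beta = \alpha^4 - 2\alpha^2 y + y^2 = (\alpha^2 - y)^2 \geq 0$. Thus $\beta \leq \alpha^4$ in both cases, which finishes the argument.

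I would also record the alternative route flagged before the statement, namely through Theorems \ref{thm1} and \ref{fuj}. Setting $C := \Phi(A\sharp B)$ and $D := \Phi(A)\sharp\Phi(B)$, the order relations in \eqref{order} give $m_1m_2 \leq D \leq M_1M_2$, while Theorem \ref{thm1} gives $D \leq \alpha C$. Applying Theorem \ref{fuj} with $D$ playing the role of the smaller operator (it satisfies the required two-sided bound) and $\alpha C$ the larger one yields $D^2 \leq \frac{(M_1M_2 + m_1m_2)^2}{4M_1M_2m_1m_2}(\alpha C)^2$. Since $\frac{(M_1M_2 + m_1m_2)^2}{4M_1M_2m_1m_2} = \alpha^2$ by \eqref{(1)}, this collapses to $D^2 \leq \alpha^4 C^2$, the desired conclusion.

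There is no serious obstacle here, as this is a corollary. The only two points needing care are recognizing that $\alpha^4 - \beta$ factors as the perfect square $(\alpha^2 - y)^2$ in the second branch, and, in the operator route, keeping the direction of the inequality straight when converting the scalar factor into an operator bound, which is legitimate precisely because $\Phi(A\sharp B)^2$ is positive and Theorem \ref{fuj} is designed exactly for squaring an ordered pair of positive operators.
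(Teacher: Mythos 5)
Your proposal is correct and follows exactly the two routes the paper itself indicates (the corollary is stated as an ``immediate consequence of Theorem \ref{thm3}'' and as deducible from Theorems \ref{thm1} and \ref{fuj}, with no further proof given): the factorization $\alpha^4-\beta=(\alpha^2-y)^2\ge 0$ settles the first route, and the application of Theorem \ref{fuj} to $m_1m_2\le D\le M_1M_2$ and $D\le\alpha C$ settles the second. Both verifications are sound.
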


\begin{remark}
It is easy to see that the coefficient $\alpha^2$ from (\ref{amm}) is smaller than $\beta$ in \eqref{af}, but we obtained the relation between $( \Phi(A)\sharp\Phi(B))^2$ and $\Phi(A\sharp B)^{2}$.
\end{remark}
The following consequence may be regarded as a Gr\"uss type inequality, see \cite{MOS} and references therein.
 \begin{corollary}\label{cor1}
Let $\Phi$ be a unital positive linear map. If $0<m_{1}^{2} \leq A\leq M_{1}^{2}$ and $0<m_{2}^{2}\leq B\leq M_{2}^{2}$ for some positive real numbers $m_{1}\leq M_{1}$ and $m_{2}\leq M_{2}$, then the following inequality holds:
 \begin{eqnarray}\label{gruss}
&&(\Phi(A)\sharp\Phi(B))^2 -\Phi(A\sharp B)^{2} \leq \left(\beta-1\right)M_1^2M_2^2
\end{eqnarray}
where $\beta$ is defined by \eqref{beta}.
\end{corollary}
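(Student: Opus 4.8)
The plan is to obtain the Gr\"uss type bound directly from Theorem \ref{thm3} by rewriting the difference of squares as a scalar multiple of $\Phi(A\sharp B)^2$ and then controlling that term by $M_1^2M_2^2$. First I would invoke Theorem \ref{thm3}, which gives $(\Phi(A)\sharp\Phi(B))^2 \leq \beta\cdot\Phi(A\sharp B)^2$. Subtracting $\Phi(A\sharp B)^2$ from both sides yields
\[
(\Phi(A)\sharp\Phi(B))^2 - \Phi(A\sharp B)^2 \leq (\beta - 1)\,\Phi(A\sharp B)^2,
\]
so the whole problem reduces to bounding $\Phi(A\sharp B)^2$ from above by $M_1^2M_2^2$ and to checking that $\beta \geq 1$ so that the coefficient is nonnegative.

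For the upper bound on $\Phi(A\sharp B)$, I would recall from \eqref{order} in the proof of Theorem \ref{thm3} that the monotonicity of the geometric mean, combined with the hypotheses $m_i^2 \leq A, B \leq M_i^2$, gives $A\sharp B \leq \sqrt{M_1^2M_2^2} = M_1M_2$. Applying the positive unital map $\Phi$ and using $\Phi(I)=I$ then produces $\Phi(A\sharp B) \leq M_1M_2$. Since $\Phi(A\sharp B)$ is a positive operator whose spectrum therefore lies in $[0, M_1M_2]$, applying the increasing function $t \mapsto t^2$ via the functional calculus yields $\Phi(A\sharp B)^2 \leq (M_1M_2)^2 = M_1^2M_2^2$.

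It remains to confirm $\beta \geq 1$. In the first branch of \eqref{beta} we have $\beta = \alpha^4$ with $\alpha \geq 1$ by \eqref{(1)}, so $\beta \geq 1$. In the second branch, writing $r := \sqrt{M_1M_2/(m_1m_2)} \geq 1$ and using the defining condition $\alpha^2 \geq r$ of that branch, we get $\beta = r(2\alpha^2 - r) \geq r(2r - r) = r^2 \geq 1$. Combining the displayed inequality with $\beta - 1 \geq 0$ and $\Phi(A\sharp B)^2 \leq M_1^2M_2^2$ gives the assertion. I do not anticipate a genuine obstacle here; the only point requiring care is that squaring the scalar operator bound is legitimate, which rests on the elementary spectral argument above (the monotonicity of $t\mapsto t^2$ on $[0,\infty)$) rather than on operator monotonicity of $t^2$, which fails in general.
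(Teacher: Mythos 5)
Your proof is correct and follows essentially the same route as the paper: apply Theorem \ref{thm3}, subtract $\Phi(A\sharp B)^2$, and then use the bound $\Phi(A\sharp B)^2\leq M_1^2M_2^2$ coming from \eqref{order}. The only difference is that you make explicit two points the paper leaves implicit --- the verification that $\beta\geq 1$ (needed so that multiplying the upper bound on $\Phi(A\sharp B)^2$ by $\beta-1$ preserves the inequality) and the spectral justification for squaring the scalar operator bound --- both of which are correct and worth stating.
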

\begin{proof}
It follows from \eqref{af} that
 \begin{eqnarray}\label{001}
( \Phi(A)\sharp\Phi(B))^2 -\Phi(A\sharp B)^{2} &\leq& \left(\beta-1\right)\Phi(A\sharp B)^{2}.
 \end{eqnarray}
It follows from \eqref{order} that
\begin{eqnarray}\label{002}
m_1^2m_2^2 \leq \Phi(A\sharp B)^2 \leq M_1^2M_2^2\,.
\end{eqnarray}
Employing \eqref{001} and \eqref{002} we infer \eqref{gruss}.
\end{proof}
To achieve our next result we need some auxiliary lemmas. The first Lemma is a consequence of the Jensen inequality and the operator convexity of $f(t)=1/t$.
\begin{lemma}\label{lem1}\cite{CHO} Let $\Phi$ be a unital strictly positive linear map and $A>0$. Then
\begin{eqnarray}\label{e1} \Phi(A)^{-1}\leq \Phi(A^{-1})\,.
\end{eqnarray} \end{lemma}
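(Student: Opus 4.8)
The plan is to obtain \eqref{e1} as the special case $f(t)=t^{-1}$ of the operator Jensen inequality, precisely as the sentence preceding the lemma indicates. Recall that a continuous function $f$ on an interval is operator convex if $f(\lambda X+(1-\lambda)Y)\leq \lambda f(X)+(1-\lambda)f(Y)$ for all $\lambda\in[0,1]$ and all self-adjoint $X,Y$ with spectra in that interval, and that the Davis--Choi (operator Jensen) inequality guarantees, for every operator convex $f$, every \emph{unital positive} linear map $\Phi$, and every self-adjoint $A$ with spectrum in the domain of $f$, the inequality
\[
f(\Phi(A))\leq \Phi(f(A)).
\]

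First I would record that $f(t)=1/t$ is operator convex on $(0,\infty)$, a classical fact (see \cite{abc}). Since $\Phi$ is unital and strictly positive and $A>0$, the operator $\Phi(A)$ is strictly positive, so $f(\Phi(A))=\Phi(A)^{-1}$ and $f(A)=A^{-1}$ are legitimate. Feeding this $f$ into the displayed Jensen inequality then gives
\[
\Phi(A)^{-1}=f(\Phi(A))\leq \Phi(f(A))=\Phi(A^{-1}),
\]
which is exactly \eqref{e1}.

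The delicate point is justifying the Jensen inequality for a map assumed only positive rather than completely positive. The quickest argument --- applying $\Phi$ entrywise to the positive semidefinite block matrix $\left(\begin{smallmatrix} A & I \\ I & A^{-1}\end{smallmatrix}\right)$ to get $\left(\begin{smallmatrix} \Phi(A) & I \\ I & \Phi(A^{-1})\end{smallmatrix}\right)\geq 0$ and then reading off the Schur complement --- secretly uses $2$-positivity, which is not assumed here. What rescues the general positive case is that operator (and not merely numerical) convexity of $1/t$ is strong enough to remove that hypothesis; this is the same mechanism behind the Kadison--Schwarz inequality $\Phi(A)^2\leq\Phi(A^2)$ for the operator convex function $t^2$. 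That ordinary convexity would not suffice is visible already at the level of the scalar tangent line $t^{-1}\geq 2t_0^{-1}-t_0^{-2}t$: applying $\Phi$ yields only $\Phi(A^{-1})\geq 2t_0^{-1}I-t_0^{-2}\Phi(A)$ for each scalar $t_0>0$, and one cannot substitute the operator $\Phi(A)$ for $t_0$ to recover $\Phi(A)^{-1}$. The operator convexity is exactly what repairs this gap, so the Jensen route is the one I would present.
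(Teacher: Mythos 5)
Your proposal is correct and follows exactly the route the paper itself indicates: the lemma is quoted from Choi's paper with the one-line justification that it is ``a consequence of the Jensen inequality and the operator convexity of $f(t)=1/t$,'' which is precisely the Davis--Choi argument you carry out. Your additional remark explaining why operator convexity (rather than $2$-positivity or mere scalar convexity) is the hypothesis that makes the Jensen inequality work for merely positive unital maps is accurate and a worthwhile clarification of the cited one-liner.
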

The next lemma is proved for matrices but a careful investigation shows that it is true for operators on an arbitrary Hilbert space.
\begin{lemma}\label{lem2}\cite{BZ03} Let $A, B\ge 0$. Then
\begin{eqnarray}\label{e2}
\|AB\|\le \frac{1}{4}\|A+B\|^2\,.
\end{eqnarray} \end{lemma}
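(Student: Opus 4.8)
The plan is to treat the matrix case --- the Bhatia--Kittaneh arithmetic--geometric-mean-type estimate recorded in \cite{BZ03} --- as the known input and to transplant it to arbitrary positive $A,B\in\mathbb B(\mathscr H)$ by compressing to a finite-dimensional subspace. Because the operator norm $\|AB\|$ is in general only approached and not attained, I would first fix $\varepsilon>0$ and choose a unit vector $x\in\mathscr H$ with $\|ABx\|>\|AB\|-\varepsilon$. Everything then reduces to understanding $A$ and $B$ on the finite-dimensional piece of $\mathscr H$ that this particular $x$ actually probes.

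Concretely, set $\mathscr M:=\operatorname{span}\{x,\,Bx,\,ABx\}$, a subspace of dimension at most three, let $P$ be the orthogonal projection of $\mathscr H$ onto $\mathscr M$, and form the compressions $A_0:=PAP$ and $B_0:=PBP$, viewed as operators on $\mathscr M\cong\mathbb C^{k}$. Compressions of positive operators are positive, so $A_0,B_0\ge0$ and the matrix inequality yields $\|A_0B_0\|\le\frac14\|A_0+B_0\|^2$. The specific choice of $\mathscr M$ is engineered so that the relevant product is reproduced exactly: since $x,Bx\in\mathscr M$ we have $B_0x=PBx=Bx$, and since in addition $ABx\in\mathscr M$ it follows that $A_0B_0x=PA(Bx)=P(ABx)=ABx$. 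Hence $\|A_0B_0\|\ge\|A_0B_0x\|=\|ABx\|>\|AB\|-\varepsilon$, while $A_0+B_0=P(A+B)P$ forces $\|A_0+B_0\|\le\|A+B\|$. Putting these together gives $\|AB\|-\varepsilon<\frac14\|A+B\|^2$, and letting $\varepsilon\to0$ completes the argument.

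The step that makes the above delicate --- and the reason a careless compression would fail --- is that compression does not commute with multiplication: $PAP\cdot PBP\neq P(AB)P$ for a generic finite-dimensional $\mathscr M$, so $\|A_0B_0\|$ carries no information about $\|AB\|$ unless $\mathscr M$ is chosen with care. Building $Bx$ and $ABx$ into $\mathscr M$ is exactly the device that repairs this, pinning down $A_0B_0x=ABx$ on the nose. It should be stressed that this maneuver only relocates the inequality from $\mathbb M_k$ to $\mathbb B(\mathscr H)$; the genuinely hard content is the matrix estimate itself, which I would cite from \cite{BZ03}.

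For completeness, were one to reprove that matrix estimate, I would pass to the Hermitian dilation $\left(\begin{smallmatrix}0&AB\\ BA&0\end{smallmatrix}\right)$, whose eigenvalues are precisely $\pm s_j(AB)$, so that $\|AB\|$ becomes its largest eigenvalue, and then chase the singular-value majorization $s_j(AB)\le\frac14\,s_j\big((A+B)^2\big)$. The elementary operator arithmetic--geometric-mean bound $\|A^{1/2}B^{1/2}\|^2=\|B^{1/2}AB^{1/2}\|\le\frac14\|A+B\|^2$ (obtained from $B^{1/2}AB^{1/2}\le B^{1/2}(\|A+B\|I-B)B^{1/2}$) only delivers the non-sharp constant $\frac12$ for $\|AB\|$; tightening it to the optimal $\frac14$ is the real obstacle, and it is here that the majorization machinery, rather than a one-line norm estimate, is genuinely required.
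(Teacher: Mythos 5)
Your compression argument is correct and, in fact, supplies a detail the paper itself omits: the paper offers no proof of this lemma, merely citing Bhatia--Kittaneh \cite[Theorem 1]{BZ03} for the matrix case and asserting in a remark that ``for infinite dimensional case, the result for operator norm is also true'' without justification. Your reduction is exactly the kind of argument needed to back that assertion. The key points all check out: choosing $x$ with $\|ABx\|>\|AB\|-\varepsilon$ and taking $\mathscr M=\operatorname{span}\{x,Bx,ABx\}$, one gets $B_0x=Bx$ and $A_0B_0x=PA(Bx)=ABx$ because $Bx$ and $ABx$ lie in $\mathscr M$, so $\|A_0B_0\|\ge\|ABx\|$, while $\|A_0+B_0\|=\|P(A+B)P\|\le\|A+B\|$; the matrix inequality on the at-most-three-dimensional space $\mathscr M$ then closes the loop. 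You are also right to flag that the naive route via $\|A^{1/2}B^{1/2}\|^2=\|B^{1/2}AB^{1/2}\|\le\frac14\|A+B\|^2$ does not suffice: by the Cordes/Heinz inequality $\|A^{1/2}B^{1/2}\|^2\le\|AB\|$, so that bound controls only the spectral radius of $AB$ (via the similarity $A^{-1/2}(AB)A^{1/2}=A^{1/2}BA^{1/2}$), not its norm, and the genuine content of \cite{BZ03} is the singular-value majorization $s_j(AB)\le\frac14 s_j((A+B)^2)$, which you correctly leave as the cited input. In short: your proof takes the same starting point as the paper (the matrix theorem of \cite{BZ03}) but adds the finite-dimensional compression step that the paper only gestures at; nothing is missing.
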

\begin{remark}
Lemma \ref{lem2} is due to Bhatia and Kittaneh in \cite[Theorem 1]{BZ03}. They proved the result for the finite dimensional case. However, for infinite dimensional case, the result for operator norm is also true . Also, we notice that if $A,B$ are compact operators, then a stronger result can be found in \cite{BZ05}.
\end{remark}
The following lemma includes the well-known operator geometric-arithmetic mean inequality.
\begin{lemma}\label{lem3}\cite[Theorem 1.27]{abc} Let $A, B \geq 0$. Then
\begin{eqnarray}
\label{e31} A\sharp B\leq \frac{A+B}{2}\,.
\end{eqnarray}
\end{lemma}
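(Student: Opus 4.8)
The plan is to reduce the operator inequality to the elementary scalar inequality $\sqrt{t}\le\tfrac{1+t}{2}$ by exploiting the congruence behavior of the geometric mean together with the functional calculus. First I would dispose of the degenerate case: it suffices to prove the claim when $A>0$, since replacing $A$ by $A+\varepsilon I$ and letting $\varepsilon\downarrow 0$ recovers the general statement by the continuity that underlies the standard limit extension of $\sharp$ to merely positive operators.

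Assuming $A>0$, the key step is to conjugate by $A^{-1/2}$. Setting $T:=A^{-1/2}BA^{-1/2}\ge 0$ and using the definition $A\sharp B=A^{1/2}(A^{-1/2}BA^{-1/2})^{1/2}A^{1/2}$, I obtain
$$A^{-1/2}(A\sharp B)A^{-1/2}=T^{1/2},\qquad A^{-1/2}\Big(\frac{A+B}{2}\Big)A^{-1/2}=\frac{I+T}{2}.$$
Since the map $X\mapsto A^{-1/2}XA^{-1/2}$ is order preserving and invertible (with inverse $X\mapsto A^{1/2}XA^{1/2}$), the asserted inequality $A\sharp B\le\tfrac{1}{2}(A+B)$ is \emph{equivalent} to the single operator inequality $T^{1/2}\le\tfrac{1}{2}(I+T)$.

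This last inequality I would establish directly from the functional calculus applied to the positive operator $T$. Indeed,
$$\frac{I+T}{2}-T^{1/2}=\frac{1}{2}\big(I-2T^{1/2}+T\big)=\frac{1}{2}\big(I-T^{1/2}\big)^2\ge 0,$$
where the middle identity is legitimate because $I$, $T^{1/2}$, and $T$ mutually commute (they are continuous functions of $T$), and the final inequality holds because the square of a self-adjoint operator is positive. Conjugating back by $A^{1/2}$, which preserves the order $\le$, then yields $A\sharp B=A^{1/2}T^{1/2}A^{1/2}\le A^{1/2}\tfrac{I+T}{2}A^{1/2}=\tfrac{1}{2}(A+B)$, completing the proof.

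I do not anticipate a serious obstacle. The only points requiring care are the passage to the strictly positive case, where one invokes the continuity of $\sharp$ that justifies its extension to positive operators, and the observation that congruence by $A^{\pm 1/2}$ respects the partial order, so that the operator inequality and its conjugated scalar-type reduction $T^{1/2}\le\tfrac12(I+T)$ are genuinely equivalent.
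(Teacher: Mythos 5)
Your proof is correct: the reduction by congruence with $A^{-1/2}$ to the inequality $T^{1/2}\le\tfrac{1}{2}(I+T)$, settled by writing $\tfrac{1}{2}(I+T)-T^{1/2}=\tfrac{1}{2}(I-T^{1/2})^2\ge 0$, together with the $\varepsilon$-perturbation to handle non-invertible $A$, is exactly the standard argument for the operator arithmetic--geometric mean inequality. The paper itself gives no proof---it simply cites \cite[Theorem 1.27]{abc}---and your argument is the proof one finds there, so there is nothing further to compare.
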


The definition of operator geometric mean easily yields the next lemma.
\begin{lemma}\label{lem4} Let $A, B>0$. Then

\begin{eqnarray}\label{e4} (A\sharp B)^{-1}=A^{-1}\sharp B^{-1}\,.
\end{eqnarray} \end{lemma}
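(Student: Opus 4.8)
The plan is to verify the identity by direct substitution into the definition $A\sharp B = A^{1/2}(A^{-1/2}BA^{-1/2})^{1/2}A^{1/2}$, since both sides will reduce to the same closed form. Throughout I would use that $A, B > 0$ guarantees every factor appearing is a strictly positive, hence invertible, operator, so that inverses and square roots are unambiguously defined by the continuous functional calculus.

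First I would invert the defining formula. Because $A\sharp B$ is a product of the three invertible operators $A^{1/2}$, $(A^{-1/2}BA^{-1/2})^{1/2}$ and $A^{1/2}$, its inverse is the product of the inverses in reverse order, giving $(A\sharp B)^{-1} = A^{-1/2}(A^{-1/2}BA^{-1/2})^{-1/2}A^{-1/2}$. Next I would write out $A^{-1}\sharp B^{-1}$ from the definition applied to the pair $A^{-1}, B^{-1}$; since $(A^{-1})^{1/2} = A^{-1/2}$ and $(A^{-1})^{-1/2} = A^{1/2}$, this yields $A^{-1}\sharp B^{-1} = A^{-1/2}(A^{1/2}B^{-1}A^{1/2})^{1/2}A^{-1/2}$.

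Comparing the two expressions, the outer factors $A^{-1/2}$ already match, so it remains only to show that the middle factors coincide, i.e.\ that $(A^{-1/2}BA^{-1/2})^{-1/2} = (A^{1/2}B^{-1}A^{1/2})^{1/2}$. Setting $X := A^{-1/2}BA^{-1/2} > 0$, a direct check gives $X^{-1} = A^{1/2}B^{-1}A^{1/2}$, so the required identity is exactly $X^{-1/2} = (X^{-1})^{1/2}$. This last equality is the crux of the argument, though it is not really an obstacle: it follows from the functional calculus, since the functions $t\mapsto t^{-1/2}$ and $t\mapsto (t^{-1})^{1/2}$ agree on the spectrum of $X$, which lies in $(0,\infty)$. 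The only point requiring care is to confirm the commuting relations $(X^{-1})^{1/2} = (X^{1/2})^{-1} = X^{-1/2}$ together with the invertibility needed to reverse the product when inverting; both are immediate from $A, B > 0$.

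As an alternative route, I could invoke the characterization of $A\sharp B$ as the unique strictly positive solution $X$ of the Riccati equation $XA^{-1}X = B$: inverting this equation gives $X^{-1}AX^{-1} = B^{-1}$, that is $X^{-1}(A^{-1})^{-1}X^{-1} = B^{-1}$, which identifies $X^{-1} = (A\sharp B)^{-1}$ as the unique positive solution of the Riccati equation defining $A^{-1}\sharp B^{-1}$. I would, however, favor the direct computation above, in keeping with the remark preceding the statement that the identity follows easily from the definition.
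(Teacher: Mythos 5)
Your computation is correct and is exactly the direct verification the paper has in mind: the paper states that the identity follows easily from the definition and offers no further proof, and your reduction to $X^{-1/2}=(X^{-1})^{1/2}$ for $X=A^{-1/2}BA^{-1/2}>0$ via the functional calculus supplies precisely that routine check.
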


We now present our next main result.
\begin{theorem}\label{thm4}
Let $\Phi$ be a unital positive linear map. If $0<m_{1}^{2} \leq A\leq M_{1}^{2}$ and $0<m_{2}^{2}\leq B\leq M_{2}^{2}$ for some positive real numbers $m_{1}\leq M_{1}$ and $m_{2}\leq M_{2}$, then the following inequality holds:
\begin{eqnarray}\label{aa}\begin{aligned}
 &\hspace{-1cm}\left(\frac{M_{2}m_{2}}{M_{1}m_{1}}\Phi(A)+\Phi(B)\right)^2\\ &\leq\left(\frac{(M_1m_1(M_{2}^{2}+m_{2}^{2})+M_2m_2( M_{1}^{2}+m_{1}^{2}))^2}{8\sqrt{M_2M_1m_1m_2}M_1^2m_1^2M_2m_2}\right)^2\Phi(A\sharp B)^2.\end{aligned}
 \end{eqnarray}
\end{theorem}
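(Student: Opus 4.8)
The first thing to set up is notation: write $X:=\frac{M_2m_2}{M_1m_1}\Phi(A)+\Phi(B)$ and $Y:=\Phi(A\sharp B)$, both of which are strictly positive because $\Phi$ is unital and $A,B$ are bounded below by positive scalars. The target \eqref{aa} is exactly $X^2\le\kappa^2Y^2$ for the displayed constant $\kappa$, and by the equivalence recorded in the introduction this is the same as the norm bound $\|XY^{-1}\|\le\kappa$. I see two natural ways to produce it: either upgrade the \emph{order} inequality of Theorem~\ref{thm2} to a \emph{squared} one by means of the Kantorovich-type Theorem~\ref{fuj}, or keep the norm formulation and run a Bhatia--Kittaneh argument built on Lemmas~\ref{lem1}--\ref{lem4}. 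I would carry out the first route, which is the more transparent.

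The plan is as follows. First, Theorem~\ref{thm2} gives the order relation $X\le C\,Y$ with $C=\frac{M_2}{m_1}+\frac{m_2}{M_1}$. Next, since $\Phi$ is unital and positive, the hypotheses $m_1^2\le A\le M_1^2$ and $m_2^2\le B\le M_2^2$ pass through $\Phi$ to give $m_1^2\le\Phi(A)\le M_1^2$ and $m_2^2\le\Phi(B)\le M_2^2$, whence
\[
0<\frac{m_2(M_1m_2+M_2m_1)}{M_1}\;\le\;X\;\le\;\frac{M_2(M_1m_2+M_2m_1)}{m_1}.
\]
Writing $S:=M_1m_2+M_2m_1$, $\widetilde m:=\frac{m_2S}{M_1}$ and $\widetilde M:=\frac{M_2S}{m_1}$, I now apply Theorem~\ref{fuj} to the pair $0<\widetilde m\le X\le\widetilde M$ together with $X\le CY$, which yields
\[
X^2\le\frac{(\widetilde M+\widetilde m)^2}{4\widetilde M\widetilde m}\,C^2\,Y^2 .
\]
A short computation using $\widetilde M+\widetilde m=SC$ and $\widetilde M\widetilde m=\frac{M_2m_2}{M_1m_1}S^2$ collapses the coefficient to $\frac{(M_1M_2+m_1m_2)^4}{4M_1^3m_1^3M_2m_2}$, that is, $X^2\le\kappa_0^2Y^2$ with $\kappa_0=\frac{(M_1M_2+m_1m_2)^2}{2M_1m_1\sqrt{M_1M_2m_1m_2}}$.

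It then remains only to check that $\kappa_0$ is no larger than the constant $\kappa$ in \eqref{aa}, so that $X^2\le\kappa_0^2Y^2\le\kappa^2Y^2$ gives exactly \eqref{aa}. Factoring the numerator of $\kappa$ as $M_1m_1(M_2^2+m_2^2)+M_2m_2(M_1^2+m_1^2)=(M_1M_2+m_1m_2)S$, one finds $\kappa=\kappa_0\cdot\frac{S^2}{4M_1M_2m_1m_2}$, and the extra factor equals $\frac{(M_1m_2+M_2m_1)^2}{4M_1M_2m_1m_2}\ge1$ by the arithmetic--geometric mean inequality. Hence $\kappa_0\le\kappa$ and \eqref{aa} follows; in fact this route proves the slightly sharper bound with $\kappa_0$ in place of $\kappa$.

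The step I expect to be the crux is the passage from the order inequality $X\le CY$ to the squared inequality: by itself $X\le CY$ only bounds the spectral radius of $XY^{-1}$ by $C$, whereas \eqref{aa} demands a bound on the full operator norm $\|XY^{-1}\|$, and these disagree precisely because $X$ and $Y$ need not commute. Theorem~\ref{fuj} is what bridges this gap, and it is also the source of the genuine Kantorovich-type loss in the constant. If instead one insists on the Bhatia--Kittaneh route of Lemmas~\ref{lem1}--\ref{lem4} — estimating $\|XY^{-1}\|\le\frac14\|\mu X+\mu^{-1}Y^{-1}\|^2$ and controlling $Y^{-1}=\Phi(A\sharp B)^{-1}\le\Phi(A^{-1}\sharp B^{-1})$ through \eqref{e1} and \eqref{e4} — the delicate point becomes extracting a sufficiently sharp bound on $\|\mu X+\mu^{-1}Y^{-1}\|$ and choosing $\mu$ so as to recover the stated form of $\kappa$ rather than a cruder constant.
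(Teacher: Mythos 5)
Your proof is correct, and it takes a genuinely different route from the paper's. The paper works in the norm formulation throughout: it derives the scalar bound $2\sqrt{M_1M_2m_1m_2}\,M_2m_2\,\Phi(A\sharp B)^{-1}+\frac{M_2m_2}{M_1m_1}\Phi(A)+\Phi(B)\leq \frac{M_2m_2}{M_1m_1}(M_1^2+m_1^2)+M_2^2+m_2^2$ by applying the Kantorovich trick $(M^2-T)(m^2-T)T^{-1}\leq 0$ separately to $A$ and to $B$, then chaining the operator AM--GM inequality \eqref{e31}, Ando's inequality \eqref{e3}, the identity $(A\sharp B)^{-1}=A^{-1}\sharp B^{-1}$, and Choi's inequality \eqref{e1}; it then feeds this into the Bhatia--Kittaneh estimate $\|PQ\|\leq\frac14\|P+Q\|^2$ to get exactly the stated constant $\kappa$. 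You instead combine the already-proved order inequality of Theorem~\ref{thm2} with the two-sided scalar bounds on $X=\frac{M_2m_2}{M_1m_1}\Phi(A)+\Phi(B)$ and invoke the Kantorovich-type squaring result of Theorem~\ref{fuj}; your algebra checks out (in particular the factorization $M_1m_1(M_2^2+m_2^2)+M_2m_2(M_1^2+m_1^2)=(M_1M_2+m_1m_2)(M_1m_2+M_2m_1)$ and the ratio $\kappa/\kappa_0=\frac{(M_1m_2+M_2m_1)^2}{4M_1M_2m_1m_2}\geq 1$ are correct). Your route is shorter, more modular, and actually yields the strictly sharper constant $\kappa_0=\frac{(M_1M_2+m_1m_2)^2}{2M_1m_1\sqrt{M_1M_2m_1m_2}}\leq\kappa$, with equality only when $M_1m_2=M_2m_1$; what the paper's route buys is a self-contained argument in the spirit of Lin's reduction of squared operator inequalities to norm inequalities, which is the technique the paper is showcasing, and which lands directly on the constant appearing in the statement. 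You correctly identify that the crux is passing from $X\leq CY$ to $X^2\leq C'^2Y^2$ for noncommuting $X,Y$, and Theorem~\ref{fuj} is a legitimate bridge for that step.
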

\begin{proof}
It is easy to see that (\ref{aa}) is equivalent to\begin{eqnarray*}\begin{aligned}
                             \left\|(\frac{M_2m_2}{M_1m_1}\Phi(A)+\Phi(B))\Phi(A\sharp B)^{-1}\right\| \leq \frac{(M_1m_1(M_{2}^{2}+m_{2}^{2})+M_2m_2( M_{1}^{2}+m_{1}^{2}))^2}{8\sqrt{M_2M_1m_1m_2}M_1^2m_1^2M_2m_2}.\end{aligned}
    \end{eqnarray*}
Evidently
    \begin{eqnarray*}
    (M_{1}^{2}-A)(m_{1}^{2}-A)A^{-1} &=& A+m_{1}^{2}M_{1}^{2}A^{-1}-m_{1}^{2}-M_{1}^{2}\leq 0,
    \end{eqnarray*}
    and hence \begin{eqnarray*}
     m_{1}^2M_{1}^2\Phi(A^{-1})+\Phi(A) &\leq & M_{1}^{2}+m_{1}^{2},
     \end{eqnarray*}
     which implies that \begin{eqnarray}\label{ff}
     M_2m_2m_{1}M_{1}\Phi(A^{-1})+\frac{M_2m_2}{M_1m_1}\Phi(A) &\leq &\frac{M_2m_2}{M_1m_1}( M_{1}^{2}+m_{1}^{2}).
     \end{eqnarray}
     In the same way, we have
\begin{eqnarray}\label{gg}
     m_{2}^2M_{2}^2\Phi(B^{-1})+\Phi(B) &\leq & M_{2}^{2}+m_{2}^{2}.
     \end{eqnarray}
Inequalities \eqref{ff} and \eqref{gg} yield that
\begin{eqnarray*}\label{ac}\begin{aligned}
 &\hspace{-2mm}\frac{M_2m_2}{M_1m_1}( M_{1}^{2}+m_{1}^{2})+M_{2}^{2}+m_{2}^{2}\\&\geq
 M_2m_2m_{1}M_{1}\Phi(A^{-1})+m_{2}^2M_{2}^2\Phi(B^{-1})+\Phi(B)+\frac{M_2m_2}{M_1m_1}\Phi(A)\\
 &\geq 2\sqrt{M_2M_1m_1m_2}M_2m_2(\Phi(A^{-1})\sharp \Phi(B^{-1}))+(\Phi(B)+\frac{M_2m_2}{M_1m_1}\Phi(A))\quad\qquad\hbox{(by \eqref{e31})}\\
 &\geq 2\sqrt{M_2M_1m_1m_2}M_2m_2\Phi(A^{-1}\sharp B^{-1})+(\Phi(B)+\frac{M_2m_2}{M_1m_1}\Phi(A))\quad\qquad\qquad\hbox{(by \eqref{e3})}\\
 &\geq 2\sqrt{M_2M_1m_1m_2}M_2m_2\Phi\left((A\sharp B)^{-1}\right)+(\Phi(B)+\frac{M_2m_2}{M_1m_1}\Phi(A))\quad\qquad\qquad\hbox{(by \eqref{e4})}\\
 &\geq 2\sqrt{M_2M_1m_1m_2}M_2m_2\Phi(A\sharp B)^{-1}+(\Phi(B)+\frac{M_2m_2}{M_1m_1}\Phi(A))\qquad\qquad\quad\quad\hbox{(by \eqref{e1})}
 \end{aligned}
\end{eqnarray*}
which implies that \begin{eqnarray}\label{acc}\begin{aligned}
                      2\sqrt{M_2M_1m_1m_2}M_2m_2\Phi(A\sharp B)^{-1}+(\Phi(B)+\frac{M_2m_2}{M_1m_1}\Phi(A))\\\leq\frac{M_2m_2}{M_1m_1}( M_{1}^{2}+m_{1}^{2})+M_{2}^{2}+m_{2}^{2}.\end{aligned}
   \end{eqnarray}
Now we have \begin{eqnarray*}\begin{aligned}
                             &\hspace{-2mm}\left\|\left(\frac{M_2m_2}{M_1m_1}\Phi(A)+\Phi(B)\right)\left(2\sqrt{M_2M_1m_1m_2}M_2m_2\Phi(A\sharp B)^{-1}\right)\right\| \\&\leq
                             \frac{1}{4}\left\|\left(\frac{M_2m_2}{M_1m_1}\Phi(A)+\Phi(B)\right)+(2\sqrt{M_2M_1m_1m_2}M_2m_2)\Phi(A\sharp B)^{-1}\right\|^2\qquad\quad\hbox{(by \eqref{e2})}\\&\leq
                             \frac{\left(M_1m_1(M_{2}^{2}+m_{2}^{2})+M_2m_2( M_{1}^{2}+m_{1}^{2})\right)^2}{4M_1^2m_1^2}\qquad\qquad\qquad\qquad\qquad\qquad\qquad\hbox{(by \eqref{acc})}
    \end{aligned}
    \end{eqnarray*}
which is equivalent to \begin{eqnarray*}\begin{aligned}
                             \left\|\left(\frac{M_2m_2}{M_1m_1}\Phi(A)+\Phi(B)\right)\Phi(A\sharp B)^{-1}\right\|&\leq \frac{(M_1m_1(M_{2}^{2}+m_{2}^{2})+M_2m_2( M_{1}^{2}+m_{1}^{2}))^2}{8\sqrt{M_2M_1m_1m_2}M_1^2m_1^2M_2m_2}.\end{aligned}
\end{eqnarray*}
\end{proof}
\begin{remark}
It is easy to obtain that
\[
\frac{M_2}{m_1}+\frac{m_2}{M_1}
\]
  in (\ref{ak}) is smaller than
\[
\frac{(M_1m_1(M_{2}^{2}+m_{2}^{2})+M_2m_2( M_{1}^{2}+m_{1}^{2}))^2}{8\sqrt{M_2M_1m_1m_2}M_1^2m_1^2M_2m_2}
 \]
in (\ref{aa}), but we obtain the relation between $\left(\frac{M_{2}m_{2}}{M_{1}m_{1}}\Phi(A)+\Phi(B)\right)^2$ and $\Phi(A\sharp B)^2$.
\end{remark}
\begin{conjecture}
Let $\Phi$ be a unital positive linear map. If $0<m_{1}^{2} \leq A\leq M_{1}^{2}$ and $0<m_{2}^{2}\leq B\leq M_{2}^{2}$ for some positive real numbers $m_{1}\leq M_{1}$ and $m_{2}\leq M_{2}$, then the following inequality hold:
\begin{eqnarray*}
(\Phi(A)\sharp\Phi(B))^2 &\leq&\frac{1}{4}\left( \sqrt{\frac{M_1M_2}{m_1m_2}}+\sqrt{\frac{m_1m_2}{M_1M_2}}\right)^2\Phi(A\sharp B)^2
 \end{eqnarray*}
and
\begin{eqnarray*}
\left( \frac{M_2m_2}{M_1m_1}\Phi (A) + \Phi (B) \right)^2 \leq \left(\frac{M_2}{m_1}+\frac{m_2}{M_1} \right)^2\Phi (A\sharp B)^2\,.
 \end{eqnarray*}

\end{conjecture}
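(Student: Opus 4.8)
The plan is to reduce both conjectured inequalities to operator norm inequalities, exactly as in the proofs of Theorems~\ref{thm3} and \ref{thm4}, and then to try to sharpen the constants. Using the equivalence $X^2\le k^2Y^2\iff\|XY^{-1}\|\le k$ for strictly positive $X,Y$, and writing $C:=\Phi(A\sharp B)$, $D:=\Phi(A)\sharp\Phi(B)$, $m:=m_1m_2$, $M:=M_1M_2$ and $K:=\frac{M_2}{m_1}+\frac{m_2}{M_1}$, the first conjectured inequality is equivalent to $\|DC^{-1}\|\le\alpha$ and the second to $\|(\frac{M_2m_2}{M_1m_1}\Phi(A)+\Phi(B))C^{-1}\|\le K$. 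Since the two conjectured constants are precisely the squares of the constants appearing in Theorems~\ref{thm1} and \ref{thm2}, the whole problem is to show that the order inequalities $D\le\alpha C$ (Theorem~\ref{thm1}) and $\frac{M_2m_2}{M_1m_1}\Phi(A)+\Phi(B)\le K\,C$ (Theorem~\ref{thm2}) can be \emph{squared} without enlarging the constant; both parts thus boil down to the same phenomenon.

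First I would follow Lin's strategy, i.e.\ attack these norm bounds through the Bhatia--Kittaneh estimate (Lemma~\ref{lem2}) after an optimal scaling, as in the proof of Theorem~\ref{thm4}. Concretely, for any $\lambda>0$ one has $\|DC^{-1}\|=\|(\lambda D)(\lambda^{-1}C^{-1})\|\le\frac14\|\lambda D+\lambda^{-1}C^{-1}\|^2$, and I would bound the self-adjoint operator $\lambda D+\lambda^{-1}C^{-1}$ from above by combining $D\le\alpha C$ with the reverse Kantorovich estimate $C+MmC^{-1}\le M+m$ (which follows from $(M-C)(C-m)\ge0$ together with $m\le C\le M$), choosing $\lambda$ so that the two contributions balance. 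For the second inequality the analogous computation, starting from the estimate \eqref{acc} already established in the proof of Theorem~\ref{thm4}, is what produces the constant recorded there; the refinement to be sought is to replace the single application of Lemma~\ref{lem2} by a sharper passage that also exploits the available lower bounds on $\frac{M_2m_2}{M_1m_1}\Phi(A)+\Phi(B)$ and on $C$.

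The hard part will be that this route, in its present form, overshoots. Because $t\mapsto t^2$ is not operator monotone, the order relation $X\le KC$ does not by itself give $X^2\le K^2C^2$: writing $S:=C^{-1/2}XC^{-1/2}$ one has $0\le S\le K$, yet $\|XC^{-1}\|=\|C^{1/2}SC^{-1/2}\|$ can exceed $\|S\|=K$ because the similarity $C^{1/2}(\cdot)C^{-1/2}$ inflates the norm by as much as $\sqrt{M/m}$. This inflation is exactly the non-commutativity defect that the conjecture asserts to be absent for these particular operators, and it is why the standard tools miss the target: the Kantorovich/Fujii argument behind Theorem~\ref{thm3} gives $\alpha^4$ and the Bhatia--Kittaneh scaling only $\alpha^3$, whereas the conjecture demands $\alpha^2$. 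Closing this gap will therefore require an estimate sharper than Lemma~\ref{lem2} and Theorem~\ref{fuj} and specific to the geometric-mean structure. The most promising directions I would pursue are: (i) to establish matching two-sided spectral bounds $\kappa C\le X\le KC$ and feed both into a refined squaring lemma; (ii) to linearize $A\sharp B$ through its $2\times2$ block-matrix (Schur complement) characterization, turning the operator statement into a positivity question for block matrices; and (iii) to reduce, via an extremal or diagonalization argument, to the $2\times2$ or even scalar case, where the inequality is elementary. I expect the production of such a structure-specific sharp norm inequality, controlling the non-commutativity defect, to be the main obstacle.
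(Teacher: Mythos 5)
This statement is the paper's closing \emph{Conjecture}: the authors offer no proof of it, so there is no argument of theirs to compare yours against. Read as a proof, your proposal has a genuine and decisive gap, one which, to your credit, you identify yourself: everything before the final paragraph is a correct reduction and a correct diagnosis, but the statement is never actually established. Your reduction of both inequalities to the norm bounds $\|DC^{-1}\|\le\alpha$ and $\|(\frac{M_2m_2}{M_1m_1}\Phi(A)+\Phi(B))C^{-1}\|\le \frac{M_2}{m_1}+\frac{m_2}{M_1}$ is right, and your accounting of what the available tools yield is accurate: the Kantorovich/Fujii route of Theorem \ref{thm3} gives $\alpha^4$, and the Lin-style optimal scaling through Lemma \ref{lem2} (take $\lambda=\alpha Mm$ in $\|D\cdot\lambda C^{-1}\|\le\frac14\|D+\lambda C^{-1}\|^2$, using $D\le\alpha C$ and $C+MmC^{-1}\le M+m$) gives $\frac{\alpha(M+m)^2}{4Mm}=\alpha^3$. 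Both overshoot the conjectured $\alpha^2$, exactly as you say, and this is precisely why the authors could only record the statement as a conjecture and why their Theorems \ref{thm3} and \ref{thm4} carry the larger constants $\beta$ and $\bigl(\frac{(M_1m_1(M_2^2+m_2^2)+M_2m_2(M_1^2+m_1^2))^2}{8\sqrt{M_1M_2m_1m_2}\,M_1^2m_1^2M_2m_2}\bigr)$.

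The missing idea is therefore the entire content of the conjecture: a mechanism for squaring the order relations $D\le\alpha C$ and $X\le KC$ without any loss, i.e.\ a structure-specific estimate that kills the similarity defect $\|C^{1/2}SC^{-1/2}\|/\|S\|\le\sqrt{M/m}$ you correctly isolate. Your three proposed directions (two-sided spectral bounds fed into a refined squaring lemma, the $2\times2$ block-matrix characterization of $A\sharp B$, reduction to low dimensions) are reasonable things to try, but none is carried out, and the third in particular cannot work as stated for the quantities here, since $C$ and $D$ need not commute and no diagonalization reduces the problem to scalars. As submitted, the proposal is a research plan that ends exactly where a proof would have to begin; it neither proves nor refutes the conjecture, which is also the status it has in the paper.
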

\bigskip


\bibliographystyle{amsplain}

\begin{thebibliography}{99}


\bibitem{AND} T. Ando, \textit{Concavity of certain maps on positive definite matrices and applications to Hadamard products}, Linear Algebra Appl. \textbf{26} (1979), 203--241.

\bibitem {BZ03} R. Bhatia and F. Kittaneh, \textit{Notes on matrix arithmetic-geometric mean inequalities}, Linear Algebra Appl. \textbf{308} (2000), 203--211.

\bibitem{CHO} M.-D. Choi, \textit{A Schwarz inequality for positive linear maps on $C^*$-algebras}, Illinois J. Math. \textbf{18} (1974), 565--574.

\bibitem {BZ05} S. W. Drury, \textit{On a question of Bhatia and Kittaneh,} Linear Algebra Appl. \textbf{437} (2012), 1955-1960.

\bibitem{FUJ} M. Fujii, S. Izumino, R. Nakamoto and Y. Seo, \textit{Operator inequalities related to Cauchy-Schwarz and H\"{o}der--McCarthy inequalities}, Nihonkai Math. J. \textbf{8} (1997), no. 2, 117--122.

\bibitem{FUR} T. Furuta, \textit{Operator inequalities associated with H\"{o}der--McCarthy and Kantorovich inequalities}, J. Inequal. Appl. \textbf{2} (1998), no. 2, 137--148.

\bibitem{ando} F. Kubo and T. Ando, \textit{Means of positive linear operators}, Math. Ann. \textbf{246} (1980), 205–-224.


\bibitem{LEE} E.-Y. Lee, \textit{A matrix reverse Cauchy-Schwarz inequality}, Linear Algebra Appl. \textbf{430} (2009), 805--810.

\bibitem{LIN1} M. Lin, \textit{On an operator Kantorovich inequality for positive linear maps}, J. Math. Anal. Appl. \textbf{402} (2013), no. 1, 127--132.

\bibitem {LIN2} M. Lin, \textit{Squaring a reverse AM-GM inequality}, Studia Math. \textbf{215} (2013), 187--194.

\bibitem{MOS} J.S. Matharu and M.S. Moslehian, \textit{Gr\"uss inequality for some types of positive linear maps}, J. Operator Theory (to appear).

\bibitem {BZ02} M. S. Moslehian, R. Nakamoto and Y. Seo, \textit{A Diaz--Matcalf type inequality for positive linear maps and its applications}, Electron. J. Linear Algebra \textbf{22} (2011), 179--190.

\bibitem{abc} J. Pe\v{c}ari\'{c}, J. Mi\'{c}i\'{c} Hot and Y. Seo, \textit{Complementary inequalities to inequalities of Jensen and Ando based on Mond-Pe\v{c}ari\'c method}, Linear Algebra Appl. \textbf{318} (2000) 87--107.

\bibitem{SEO1} Y. Seo, \textit{Reverses of Ando's inequality for positive linear maps}, Math. Inequal. Appl. \textbf{14} (2011), no. 4, 905--910.
\end{thebibliography}

\end{document}